\newtheorem{theorem}{Theorem}
\newtheorem{prop}[theorem]{Proposition}
\newtheorem{remark}{Remark}
\newtheorem{definition}[theorem]{Definition}
\newenvironment{proof-sketch}{\noindent{\bf Sketch of Proof}\hspace*{1em}}{\qed\bigskip}
\newcommand{\RR}{\mathbb R}
\newcommand{\NN}{\mathbb N}
\renewcommand{\leq}{\leqslant}
\renewcommand{\geq}{\geqslant}
\begin{document}
\title[Periodic solutions for implicit evolution inclusions]{Periodic solutions for implicit evolution inclusions}
\author[N.S. Papageorgiou]{Nikolaos S. Papageorgiou}
\address[N.S. Papageorgiou]{National Technical University, Department of Mathematics,
				Zografou Campus, Athens 15780, Greece \& Institute of Mathematics, Physics and Mechanics, 1000 Ljubljana, Slovenia}
\email{\tt npapg@math.ntua.gr}
\author[V.D. R\u{a}dulescu]{Vicen\c{t}iu D. R\u{a}dulescu}
\address[V.D. R\u{a}dulescu]{Faculty of Applied Mathematics, AGH University of Science and Technology, 30-059 Krak\'ow, Poland \& Institute of Mathematics, Physics and Mechanics, 1000 Ljubljana, Slovenia \& Institute of Mathematics ``Simion Stoilow" of the Romanian Academy, P.O. Box 1-764, 014700 Bucharest, Romania}
\email{\tt vicentiu.radulescu@imfm.si}
\author[D.D. Repov\v{s}]{Du\v{s}an D. Repov\v{s}}
\address[D.D. Repov\v{s}]{Faculty of Education and Faculty of Mathematics and Physics, University of Ljubljana \& Institute of Mathematics, Physics and Mechanics, 1000 Ljubljana, Slovenia}
\email{\tt dusan.repovs@guest.arnes.si}
\keywords{Evolution triple, compact embedding, pseudo-monotone map, coercive map, implicit inclusion, periodic solution.\\
\phantom{aa} 2010 Mathematics Subject Classification. Primary:  34A20. Secondary: 35A05, 35R70.}
\begin{abstract}
We consider a nonlinear implicit evolution inclusion driven by a nonlinear, nonmonotone, time-varying set-valued map and defined in the framework of an evolution triple of Hilbert spaces. Using an approximation technique and a surjectivity result for parabolic operators of monotone type, we show the existence of a periodic solution.
\end{abstract}
\maketitle

\section{Introduction}

In this paper we study the following periodic implicit evolution inclusion
\begin{equation}\label{eq1}
	\left\{
		\begin{array}{ll}
			\frac{d}{dt}(Bu(t)) + A(t,u(t))\ni 0\ \mbox{for almost all}\ t\in T=[0,b] \\
			B(u(0)) = B(u(b)).
		\end{array}
	\right\}
\end{equation}

Problem (\ref{eq1}) is defined in the framework of an evolution triple $(X, H, X^*)$ of Hilbert spaces (see Section 2), where $B\in\mathcal{L}(X,X^*)$ and $A:T\times X\rightarrow 2^{X^*}$ is a map measurable in $t\in T$ and such that for almost all $t\in T$, $A(t,\cdot)$ is bounded and pseudo-monotone.

Implicit evolution equations were studied by Andrews, Kuttler \& Schillor \cite{1}, Barbu \cite{2}, Barbu \& Favini \cite{3}, Favini \& Yagi \cite{6}, Liu \cite{11}, and Showalter \cite{14}. However, in all these works, the operator $A$ was time-invariant and maximal monotone. Moreover, the aforementioned works treat the Cauchy problem. We are not aware of any work on implicit evolution equations treating the periodic problem. We mention also the works of Barbu \& Favini \cite{4} and DiBenedetto \& Showalter \cite{5}, treating the case where $B$ is nonlinear monotone. For this case the hypotheses and the techniques are different.

This paper is strongly influenced by Lions \cite{10}. In fact, our existence result (Theorem~7) is based on a multivalued version of a surjectivity result, which was proved for the first time for single-valued maps by Lions \cite[Theorem 1.2, p. 319]{10}, see Theorem 4 below. This way we can accommodate the multivalued nature of the map $A(t,x)$ in problem \eqref{eq1}. The fact that we allow $A(t,x)$ to be set-valued broadens significantly the applicability of our work. Now we can also treat the subdifferential of continuous but not $C^1$-convex functionals, a situation that the single-valued formulation cannot handle. In addition, the presence of the operator $B$ in the time derivative complicates the abstract setting. Since $B$ can be degenerate, this adds an additional level of difficulty in the analysis of problem \eqref{eq1} compared to the applications studied by Lions \cite[pp. 321-328]{10}. We overcome the difficulty, using the elliptic regularization technique, also first introduced by Lions.

\section{Mathematical background}

Suppose that $X$ and $Y$ are Banach spaces and $X$ is  continuously and densely embedded into $Y$. Then we know that $Y^*$ is continuously embedded into $X^*$ and if $X$ is reflexive, then the embedding of $Y^*$ into $X^*$ is also dense.

\begin{definition}\label{def1}
	By an ``evolutions triple", we mean a triple of spaces
	$$
	X\hookrightarrow H \hookrightarrow X^*
	$$
	such that $X$ is a separable reflexive Banach space, $H$ is a separable Hilbert space identified with its dual (pivot space), and $X$ is  continuously embedded into $H$. We say that $(X, H, X^*)$ is an evolution triple of Hilbert spaces, if all three spaces are Hilbert.
\end{definition}

Evidently, $H^*=H$ is continuously and densely embedded into $X^*$. By $||\cdot||$ (resp $|\cdot|,\ ||\cdot||_*$), we denote the norm of $X$ (resp. of $H, X^*$). We have
$$
|\cdot|\leq c_1||\cdot||\ \mbox{and}\ ||\cdot||_*\leq c_2|\cdot|\ \mbox{for some}\ c_1,c_2>0.
$$

We denote by $\langle\cdot,\cdot\rangle$ the duality brackets for the pair $(X^*, X)$ and by $(\cdot,\cdot)$ the inner product of $H$. We have
$$
\langle\cdot,\cdot\rangle|_{H\times X}=(\cdot,\cdot).
$$

Given an evolution triple $(X, H, X^*)$ and $1<p<\infty$, we can define the following Banach space:
$$
W_p(0,b) = \{u\in L^p(T,X):u'\in L^{p'}(T,X^{*})\}.
$$

In this definition, $\frac{1}{p}+\frac{1}{p'}=1$ and the derivative $u'$ of $u$ is understood in the sense of vectorial distributions. A function $u\in W_p(0,b)$ viewed as a function with values in $X^*$, is absolutely continuous and so $$W_p(0,b)\subseteq AC^{1,p'}(T,X^*)=W^{1,p'}((0,b), X^*).$$ Also, we know that $L^p(T,X^*)^*=L^{p'}(T,X).$ The space $W_p(0,b)$ is continuously and densely embedded into $C(T,H)$ and its elements satisfy the following integration by parts formula.

\begin{prop}\label{prop2}
	If $(X, H, X^*)$ is an evolution triple and $u,v\in W_p(0,b)$ $(1<p<\infty)$, then the mapping $t\mapsto (u(t),v(t))$ is absolutely continuous and
	$$
	\frac{d}{dt}(u(t),v(t)) = \langle u'(t),v(t)\rangle + \langle u(t),v'(t)\rangle\ \mbox{for almost all}\ t\in T.
	$$
\end{prop}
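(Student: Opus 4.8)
The plan is to establish the integration-by-parts formula by a density argument, reducing the general case in $W_p(0,b)$ to the case of smooth functions where the formula is an elementary consequence of the classical product rule. First I would recall that $W_p(0,b)$ embeds continuously into $C(T,H)$, so that the pointwise values $u(t),v(t)\in H$ and the scalar function $t\mapsto (u(t),v(t))$ are well defined and bounded on $T$. I would also use the density of smooth functions: since $X$ is dense in $H$ and $H$ is dense in $X^*$, one can approximate any $u\in W_p(0,b)$ in the norm of $W_p(0,b)$ by a sequence $u_n$ of functions in $C^\infty(T,X)$ (equivalently, finite sums $\sum \varphi_k(t)x_k$ with $\varphi_k\in C^\infty(T)$ and $x_k\in X$). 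This density statement is the standard one for evolution-triple Sobolev spaces and may be cited.

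For smooth $u,v$ of this form, the map $t\mapsto (u(t),v(t))$ is a finite sum of products of $C^\infty$ scalar functions, hence $C^1$, and the ordinary product rule gives
$$
\frac{d}{dt}(u(t),v(t)) = (u'(t),v(t)) + (u(t),v'(t)).
$$
Because $u'(t),v'(t)\in X$ in the smooth case, and the duality bracket restricted to $H\times X$ coincides with the inner product of $H$, I can rewrite the right-hand side as $\langle u'(t),v(t)\rangle + \langle u(t),v'(t)\rangle$, which is the asserted identity for smooth functions. Integrating over $[s,t]\subseteq T$ then yields the integrated form
$$
(u(t),v(t)) - (u(s),v(s)) = \int_s^t \big[\langle u'(\tau),v(\tau)\rangle + \langle u(\tau),v'(\tau)\rangle\big]\,d\tau.
$$

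Next I would pass to the limit. Given $u,v\in W_p(0,b)$, pick smooth $u_n\to u$ and $v_n\to v$ in $W_p(0,b)$. Then $u_n\to u$, $v_n\to v$ in $C(T,H)$, so $(u_n(t),v_n(t))\to(u(t),v(t))$ uniformly in $t$; and $u_n'\to u'$, $v_n'\to v'$ in $L^{p'}(T,X^*)$ while $v_n\to v$, $u_n\to u$ in $L^p(T,X)$, so the integrands $\langle u_n',v_n\rangle$ and $\langle u_n,v_n'\rangle$ converge in $L^1(T)$ by the bilinearity of the duality pairing together with a Hölder estimate. Passing to the limit in the integrated identity above shows that $t\mapsto(u(t),v(t))$ equals an absolutely continuous function (the integral of an $L^1$ function plus a constant), hence is itself absolutely continuous, and differentiating the integral relation for a.e.\ $t$ via the Lebesgue differentiation theorem gives the pointwise formula claimed in Proposition~\ref{prop2}.

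The main obstacle is the density step: one must know that functions in $W_p(0,b)$ can be approximated, in the graph norm $\|u\|_{L^p(T,X)}+\|u'\|_{L^{p'}(T,X^*)}$, by functions that are smooth in $t$ with values in $X$. The subtlety is that naive mollification in $t$ of a function valued in $X$ is fine, but one also has to handle the boundary of $T=[0,b]$ (reflection or extension) and check that mollification commutes with the distributional derivative in the $X^*$-valued sense; this is where one invokes the standard theory. Once density is in hand, everything else is routine: the product rule for scalar $C^1$ functions, the compatibility $\langle\cdot,\cdot\rangle|_{H\times X}=(\cdot,\cdot)$, Hölder's inequality, and the continuity of the evaluation $W_p(0,b)\hookrightarrow C(T,H)$.
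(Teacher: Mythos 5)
Your proposal is correct, and it is essentially the standard proof of this fact: the paper itself offers no argument for Proposition~\ref{prop2}, stating it as known background and referring to Gasinski \& Papageorgiou \cite{7} (see also Zeidler \cite{15}), where exactly your density-plus-product-rule argument is carried out. One small remark: in the standard treatment the continuous embedding $W_p(0,b)\hookrightarrow C(T,H)$ is itself obtained from the same density argument and the integrated identity, so if you wanted a self-contained proof you could avoid quoting that embedding and instead deduce continuity of $t\mapsto (u(t),v(t))$ directly from the limit of the integrated formula; as written, relying on the embedding (which the paper states before the proposition) is legitimate and not circular.
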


If $(X, H, X^*)$ is an evolution triple and $X$ is  compactly embedded into $H$, then $H^*=H$ is compactly embedded into $X^*$ (Schauder's theorem) and $W_p(0,b)$ is  compactly embedded into $L^p(T,H)$. For details, see Gasinski \& Papageorgiou \cite{7}.

We will use the following notions from set-valued analysis (see \cite{9}).
\begin{itemize}
	\item [(a)] If $V, W$ are Hausdorff topological spaces and $G:V\rightarrow 2^W\backslash\{\emptyset\}$ is a multivalued map, then we say that $G(\cdot)$ is ``upper semicontinuous" (``usc" for short), if for every $C\subseteq W$ closed, the set $G^-(C)=\{v\in V: G(v)\cap C\neq\emptyset\}$ is closed.
	\item [(b)] If $T=[0,b]$, $Y$ is a separable Banach space and $G:T\rightarrow 2^Y\backslash\{\emptyset\}$ is a multivalued map, then we say that $G(\cdot)$ is ``graph measurable" if
		$$
		\mbox{Gr}\,G = \{(t,y)\in T\times Y:y\in G(t)\}\in\mathcal{L}_T\otimes B(Y),
		$$
		with $\mathcal{L}_T$ being the Lebesgue $\sigma$-field of $T$ and $B(Y)$ the Borel $\sigma$-field on $Y$.
\end{itemize}

Given a Banach space, we will use the following notation
$$
P_{f(c)}(X)=\{C\subseteq Y:C\ \mbox{is nonempty, closed (and convex)}\}.
$$

Also, if $C\subseteq Y$, then we define
$$
|C| = \sup\left\{||c||_Y:c\in C\right\}.
$$

Let $Y$ be a reflexive Banach space and $A:Y\rightarrow2^{Y^*}$ a multivalued map. We say that $A(\cdot)$ is pseudo-monotone, if the following conditions are satisfied:
\begin{itemize}
	\item for every $y\in Y,\ A(y)$ is nonempty, closed, and convex;
	\item $A(\cdot)$ is bounded (that is, maps bounded sets to bounded sets);
	\item if $y_n\xrightarrow{w}y$ in $Y$, $y^*_n\xrightarrow{w}y^*$ in $Y^*$ with $y^*_n\in A(y_n)$ for all $n\in\NN$ and
		$$
		\limsup_{n\rightarrow\infty}\langle y^*_n,y_n-y\rangle_{Y^*Y}\leq 0,
		$$
		then $y^*\in A(y)$ and $\langle y^*_n,y_n\rangle_{Y^*Y}\rightarrow\langle y^*,y\rangle_{Y^*Y}$.
\end{itemize}

Any maximal monotone map $A:Y\rightarrow 2^{Y^*}\backslash\{\emptyset\}$ is pseudo-monotone (see Gasinski \& Papageorgiou \cite[pp. 331-332]{7}). As in the case of maximal monotone maps, pseudo-monotone operators exhibit nice surjectivity properties. In particular, a pseudo-monotone coercive (that is, $\frac{\inf\{\langle y^*,y\rangle_{Y^*Y}:y^*\in A(y)\}}{||y||_Y}\rightarrow+\infty$ as $||y||_{Y}\rightarrow+\infty$) map is surjective (see Gasinski \& Papageorgiou \cite[p. 326]{7}).

For dynamic problems (evolution equations), we have the following variant of the notion of pseudo-monotonicity.

\begin{definition}\label{def3}
	Let $Y$ be a reflexive Banach space, $L:D(L)\subseteq Y\rightarrow Y^*$ be a linear, maximal monotone operator and $A:Y\rightarrow 2^{Y^*}$ a multivalued map. We say that $A(\cdot)$ is ``L-pseudo-monotone", if the following conditions hold:
	\begin{itemize}
		\item [(i)] for every $y\in Y$, $A(y)\subseteq Y^*$ is nonempty, $w$-compact, and convex;
		\item [(ii)]  $A:Y\rightarrow 2^{Y^*}\backslash\{\emptyset\}$ is usc from every finite dimensional subspace of $Y$ into $Y^*$ furnished with the weak topology;
		\item [(iii)] if $\{y_n\}_{n\geq1}\subseteq D(L)$, $y_n\xrightarrow{w}y\in D(L)$ in $Y$, $L(y_n)\xrightarrow{w} L(y)$ in $Y^*$, $y^*_n\in A(y_n)$ for all $n\in\NN$, $y^*_n\xrightarrow{w}y^*$ in $Y^*$ and $\limsup_{n\rightarrow\infty}\langle y^*_n,y_n-y\rangle\leq0$, then $y^*\in A(y)$ and $\langle y^*_n,y_n\rangle_{Y^*Y}\rightarrow\langle y^*,y\rangle_{Y^*Y}$.
	\end{itemize}
\end{definition}

These operators have nice surjectivity properties. The following result can be found in Papageorgiou, Papalini \& Renzacci \cite{12} (the single-valued version of this property is due to Lions \cite{10}).

\begin{theorem}\label{th3}
	If $Y$ is a strictly convex reflexive Banach space, $L:D(L)\subseteq Y\rightarrow Y^*$ is a linear, maximal monotone operator, and $A:Y\rightarrow 2^{Y^*}$ is bounded, L-pseudo-monotone, and coercive, then $L+A$ is surjective.
\end{theorem}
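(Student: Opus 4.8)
The plan is to use a Galerkin approximation, reducing the surjectivity of $L+A$ to a sequence of finite-dimensional set-valued surjectivity problems, and then to pass to the limit by combining the (weak) closedness of the graph of the linear operator $L$ with the $L$-pseudo-monotonicity of $A$. The hypothesis that $Y$ is strictly convex is used only in the auxiliary constructions (as in \cite{12}).

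Fix $f\in Y^*$; the goal is to produce $u\in D(L)$ and $u^*\in A(u)$ with $Lu+u^*=f$. Since $L$ is linear, $0\in D(L)$ and $L0=0$, so monotonicity gives $\langle Lv,v\rangle\geq 0$ for all $v\in D(L)$; moreover maximality forces $D(L)$ to be dense and the graph $\mbox{Gr}(L)$ to be a weakly closed linear subspace of $Y\times Y^*$. I would pick an increasing sequence of finite-dimensional subspaces $Y_1\subseteq Y_2\subseteq\cdots\subseteq D(L)$ with $\overline{\bigcup_n Y_n}=Y$, chosen adapted to $L$ (for instance, in the parabolic applications, an $L$-invariant ``special basis'' in the sense of Lions \cite{10}), which is the point where the structure of $L$ really enters. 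Let $i_n:Y_n\hookrightarrow Y$ and $i_n^*:Y^*\to Y_n^*$ be the inclusion and its adjoint. On $Y_n$ I consider the set-valued map $\Phi_n(y)=i_n^*\bigl(Ly+A(y)\bigr)$. Using parts (i) and (ii) of Definition~\ref{def3} and continuity of $L|_{Y_n}$, $\Phi_n$ is upper semicontinuous with nonempty, convex, compact values; since $\langle Ly,y\rangle\geq 0$ and $A$ is coercive, $\Phi_n$ is coercive on $Y_n$. A finite-dimensional set-valued surjectivity theorem (a corollary of Brouwer's fixed point theorem, e.g.\ via Kakutani--Fan--Glicksberg, or the Debrunner--Flor lemma) then yields $u_n\in Y_n$ and $u_n^*\in A(u_n)$ with
$$
\langle Lu_n+u_n^*,\,v\rangle=\langle f,v\rangle\qquad\text{for all }v\in Y_n.
$$

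Next I would derive the a priori bounds. Testing the Galerkin identity with $v=u_n$ and using $\langle Lu_n,u_n\rangle\geq 0$ gives $\langle u_n^*,u_n\rangle\leq\|f\|_*\|u_n\|$; if $\|u_n\|\to\infty$ this contradicts the coercivity of $A$, so $\sup_n\|u_n\|<\infty$, and then $\sup_n\|u_n^*\|_*<\infty$ since $A$ is bounded. Passing to a subsequence, $u_n\xrightarrow{w}u$ in $Y$ and $u_n^*\xrightarrow{w}u^*$ in $Y^*$, so $g_n:=f-u_n^*\xrightarrow{w}g:=f-u^*$. The Galerkin identity reads $\langle Lu_n,v\rangle=\langle g_n,v\rangle$ for $v\in Y_m$ and $n\geq m$; with the $L$-compatible choice of $\{Y_n\}$ one keeps $\{Lu_n\}$ bounded in $Y^*$, hence (reflexivity) $Lu_n\xrightarrow{w}\xi$ along a further subsequence, and evaluating against $v$ in the dense set $\bigcup_m Y_m$ gives $\xi=g$. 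Since $(u_n,Lu_n)\in\mbox{Gr}(L)$ and $\mbox{Gr}(L)$ is weakly closed, it follows that $u\in D(L)$ and $Lu=g$, i.e.\ $Lu+u^*=f$.

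It then remains to check $u^*\in A(u)$, which is where $L$-pseudo-monotonicity closes the argument. From the Galerkin identity with $v=u_n$, $\langle u_n^*,u_n\rangle=\langle f,u_n\rangle-\langle Lu_n,u_n\rangle$; monotonicity of $L$ together with $u_n\xrightarrow{w}u$ and $Lu_n\xrightarrow{w}Lu$ gives $\liminf_n\langle Lu_n,u_n\rangle\geq\langle Lu,u\rangle$, hence $\limsup_n\langle u_n^*,u_n-u\rangle\leq\langle f,u\rangle-\langle Lu,u\rangle-\langle u^*,u\rangle=0$ using $Lu+u^*=f$. Since $\{u_n\}\subseteq D(L)$, $u_n\xrightarrow{w}u\in D(L)$, $Lu_n\xrightarrow{w}Lu$, $u_n^*\in A(u_n)$ and $u_n^*\xrightarrow{w}u^*$, part (iii) of Definition~\ref{def3} yields $u^*\in A(u)$. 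Therefore $f\in(L+A)(u)$, and since $f$ was arbitrary, $L+A$ is surjective. The hard part is the step $Lu_n\xrightarrow{w}Lu$ with $u\in D(L)$: the Galerkin identity only controls $Lu_n$ tested against $Y_n$, so upgrading this to a genuine bound on $\{Lu_n\}$ in $Y^*$ is exactly where one must exploit the linearity and maximal monotonicity of $L$ (through an $L$-compatible approximation scheme, or the finer arguments of \cite{12}); the finite-dimensional solvability, the energy estimates, and the concluding use of $L$-pseudo-monotonicity are routine.
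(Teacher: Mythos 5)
The paper never proves this theorem: it is quoted from Papageorgiou--Papalini--Renzacci \cite{12} (with the single-valued case attributed to Lions \cite{10}), so there is no in-paper argument to compare with; your proposal therefore has to stand on its own, and it does not, because the one step you yourself flag as ``the hard part'' is exactly the mathematical content of the theorem and is left unproved. Your Galerkin identity controls $Lu_n$ only against $Y_n$, and the a priori estimates coming from coercivity and boundedness of $A$ give bounds on $\|u_n\|$ and $\|u_n^*\|_*$ only; nothing in the hypotheses produces a bound on $\{Lu_n\}$ in $Y^*$. The asserted remedy --- an ``$L$-compatible special basis'' keeping $\{Lu_n\}$ bounded --- is unsupported: in the abstract setting $L$ is just an unbounded linear maximal monotone operator and there is no reason finite-dimensional subspaces of $D(L)$ interacting well with $L$ exist; even in Lions' concrete parabolic setting ($L=d/dt$), the Galerkin method does not yield bounds on $u_n'$ in $L^{p'}(T,X^*)$ unless the projections essentially commute with the principal part (a spectral basis for a fixed symmetric operator), which is unavailable here, where $A(t,\cdot)$ is time-dependent and multivalued. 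Without that bound you can neither identify $Lu=f-u^*$ nor invoke condition (iii) of Definition~\ref{def3}, which explicitly requires $Lu_n\xrightarrow{w}Lu$; so the argument collapses precisely where the difficulty sits. (The surrounding steps --- finite-dimensional solvability via Kakutani-type surjectivity, the energy estimate, weak closedness of $\mathrm{Gr}(L)$, the $\liminf$ inequality from monotonicity of $L$, and the final appeal to $L$-pseudo-monotonicity --- are fine, and a countability issue remains since $Y$ is not assumed separable, though that is repairable with a directed family of subspaces.)

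A telling symptom is that your proof never uses strict convexity of $Y$, which you dismiss as ``auxiliary''. In the known proofs this hypothesis is what makes the argument work: after renorming, the duality map $F$ is single-valued and demicontinuous, and one regularizes $L$ (Yosida-type approximation $L_\lambda=L(I+\lambda F^{-1}L)^{-1}$, or an elliptic regularization in the spirit of Lions) to obtain an everywhere-defined, bounded, monotone, demicontinuous operator; then $L_\lambda+A$ is pseudo-monotone, bounded and coercive, hence surjective, and --- crucially --- the regularized equation holds exactly in $Y^*$, so the ``$L$-part'' equals $f$ minus a term bounded because $A$ is bounded. That is precisely the mechanism that supplies the missing bound your Galerkin scheme cannot provide, after which one passes to the limit in $\lambda$ and uses $L$-pseudo-monotonicity as you do at the end. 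To complete your proof you would either have to import such a regularization (at which point the Galerkin layer becomes superfluous) or supply a genuine construction of subspaces giving the $Lu_n$ bound, which is not available at this level of generality.
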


\section{Periodic solutions}

In what follows, $T=[0,b]$ and $(X, H, X^*)$ is an evolution triple of Hilbert spaces. We assume that $X$ is compactly embedded into $H$ (hence so is $H^*=H$ into $X^*$). The hypotheses on the data of (\ref{eq1}) are the following:

$H(B)$: $B\in\mathcal{L}(X,X^*)$ and is symmetric and monotone.

$H(A)$: $A:T\times X\rightarrow P_{f_c}(X^*)$ is a multivalued map such that
\begin{itemize}
	\item [(i)] for all $x\in X$, the mapping $t\mapsto A(t,x)$ is graph measurable;
	\item [(ii)] for almost all $t\in T$, the mapping $x\mapsto A(t,x)$ is pseudo-monotone;
	\item [(iii)] for almost all $t\in T$ and all $x\in X$, we have
		$$
		|A(t,x)|\leq c_1(t) + c_2||x||^{p-1}
		$$
		with $c_1\in L^{p'}(T), 2\leq p<\infty$ and $c_2>0$;
	\item [(iv)] for almost all $t\in T$ and all $x\in X$, we have
		$$
		\inf\left\{\langle u^*,x\rangle: u^*\in A(t,x)\right\}\geq c_3||x||^p - c_4(t),
		$$
		with $c_3>0$ and $c_4\in L^1(T)$.
\end{itemize}

Let $J:X\rightarrow X^*$ be the duality (Riesz) map on the Hilbert space $X$. We know that $J(\cdot)$ is an isometric isomorphism (the Riesz-Fr\'echet theorem) which is monotone.
Hence for every $\epsilon>0$ we have $(\epsilon J+B)^{-1}\in \mathcal{L}(X^*,X)$. Then on $X^*$ we consider the following bilinear form
\begin{equation}\label{eq2}
	(u,v)_* = \langle (\epsilon J+B)^{-1}u,v\rangle\ \mbox{for all}\ u,v\in X^*.
\end{equation}

Hypotheses $H(B)$ imply that $(\cdot,\cdot)_*$ is an inner product on $X^*$. Let $|\cdot|_*$ denote the norm corresponding to this inner product. Clearly, $|\cdot|_*$ and $||\cdot||_*$ are equivalent norms on $X^*$. So, if $V^*$ denotes the space $X^*$ equipped with the norm $|\cdot|_*$, then $V^*$ is a Hilbert space. Using the Riesz-Fr\'echet theorem, we identify $V^*$ with its dual.

Let $A_\epsilon:T\times V^*\rightarrow P_{f_c}(V^*)$ be defined by
$$
A_\epsilon(t,v) = A(t, (\epsilon J+B)^{-1}v).
$$

Then we introduce the multivalued Nemitsky map $\hat{A}_\epsilon: L^p(T,V^*)\rightarrow 2^{L^{p'}(T,V^*)}$  corresponding to $A_\epsilon(\cdot,\cdot)$, defined by
$$
\hat{A}_\epsilon(v) = \{u\in L^{p'}(T,V^*):u(t)\in A_\epsilon(t,v(t))\ \mbox{for almost all}\ t\in T\}.
$$

Consider the function space $$W^{per}_p((0,b),V^*)=\{u\in L^p(T,V^*):u'\in L^{p'}(T,V^*), u(0)=u(b)\}.$$ We know that $W^{per}_p((0,b),V^*)\hookrightarrow C(T,V^*)$ and so the evaluations of $u$ at $t=0$ and $t=b$ make sense. Let $L:W^{per}_p((0,b),V^*)\subseteq L^p(T,V^*)\rightarrow L^{p'}(T,V^*)$ be defined by
$$
L(u)=u'.
$$

We know that $L(\cdot)$ is linear and maximal monotone (see Hu \& Papageorgiou \cite[p. 419]{9} and Zeidler \cite[p. 855]{15}).

\begin{prop}\label{prop4}
	If hypotheses $H(B), H(A)$ hold and $\epsilon>0$, then for every $u\in L^p(T,V^*)$, $\hat{A}_\epsilon(u)\subseteq L^{p'}(T,V^*)$ is nonempty, $w$-compact and convex, and the mapping $u\mapsto\hat{A}_\epsilon(u)$ is $L$-pseudo-monotone.
\end{prop}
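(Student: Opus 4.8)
The plan is to verify the three defining conditions of $L$-pseudo-monotonicity (Definition~\ref{def3}) for the Nemitsky map $\hat A_\epsilon$ relative to the linear maximal monotone operator $L(u)=u'$ on $W^{per}_p((0,b),V^*)$. First I would establish the nonemptiness of $\hat A_\epsilon(u)$: since $A_\epsilon(t,v)=A(t,(\epsilon J+B)^{-1}v)$ and $(\epsilon J+B)^{-1}\in\mathcal L(X^*,X)$, hypothesis $H(A)(i)$ together with the Castaing representation and the Yankov--von Neumann--Aumann selection theorem yields a measurable selection $t\mapsto u(t)\in A_\epsilon(t,v(t))$; the growth bound $H(A)(iii)$ combined with the equivalence of $|\cdot|_*$ and $\|\cdot\|_*$ gives $u\in L^{p'}(T,V^*)$. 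Convexity of $\hat A_\epsilon(u)$ is immediate from the convexity of the values of $A(t,\cdot)$. For $w$-compactness, since $\hat A_\epsilon(u)$ is bounded in the reflexive space $L^{p'}(T,V^*)$, it suffices to show it is weakly (equivalently, strongly, by convexity) closed; this follows from the standard convergence-theorem argument using that the values $A(t,\cdot)$ are closed convex and the pointwise inclusion is preserved under passing to a subsequence converging a.e. after a Mazur-type argument.

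Next, condition (ii), upper semicontinuity from finite-dimensional subspaces of $L^p(T,V^*)$ into $L^{p'}(T,V^*)$ with the weak topology, I would derive from the pseudo-monotonicity of $A_\epsilon(t,\cdot)$ for a.a.\ $t$ (inherited from $H(A)(ii)$ via the continuous linear bijection $(\epsilon J+B)^{-1}$, which also preserves pseudo-monotonicity). On a finite-dimensional subspace strong and weak convergence coincide, so given $v_n\to v$ in such a subspace and $u_n\in\hat A_\epsilon(v_n)$ with $u_n\xrightarrow{w}u$, one extracts an a.e.\ convergent subsequence of $v_n$ and applies the pointwise pseudo-monotonicity (which for bounded-valued pseudo-monotone maps implies the graph is sequentially closed in the strong$\times$weak topology) together with a Mazur argument to conclude $u(t)\in A_\epsilon(t,v(t))$ a.e., hence $u\in\hat A_\epsilon(v)$; since $\hat A_\epsilon$ has weakly compact values, sequential closedness of the graph suffices for usc into the weak topology.

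The substantive part is condition (iii), the $L$-pseudo-monotonicity inequality. Suppose $v_n\xrightarrow{w}v$ in $L^p(T,V^*)$ with $v_n,v\in W^{per}_p((0,b),V^*)$, $v_n'\xrightarrow{w}v'$ in $L^{p'}(T,V^*)$, $u_n\in\hat A_\epsilon(v_n)$, $u_n\xrightarrow{w}u$ in $L^{p'}(T,V^*)$, and $\limsup_n\int_0^b(u_n(t),v_n(t)-v(t))_*\,dt\le 0$. The key extra ingredient, not available in the purely static setting, is a compactness gain from the bound on $v_n'$: writing $v_n=(\epsilon J+B)u_n$-type relations is not quite right, but the point is that $v_n$ bounded in $W^{per}_p$ and the compact embedding $X\hookrightarrow H$ transported through $(\epsilon J+B)^{-1}$ should give $(\epsilon J+B)^{-1}v_n\to(\epsilon J+B)^{-1}v$ strongly in $L^p(T,H)$ along a subsequence; I would then combine this with the integration-by-parts formula (Proposition~\ref{prop2}), which gives $\int_0^b(v_n',v_n-v)_*\,dt\ge -\frac12|v_n(b)-v(b)|_*^2+\frac12|v_n(0)-v(0)|_*^2$, and the periodicity $v_n(0)=v_n(b)$, $v(0)=v(b)$ makes the boundary terms telescope to a nonpositive quantity, so that $\limsup_n\langle L v_n,v_n-v\rangle\ge 0$; since also $\liminf_n\langle Lv_n,v_n-v\rangle\ge 0$ by weak lower semicontinuity-type arguments using maximal monotonicity of $L$, one gets $\langle Lv_n,v_n-v\rangle\to 0$. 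From this and the hypothesis one passes to a pointwise a.e.\ version: a Fatou-type argument shows $\limsup_n(u_n(t),(\epsilon J+B)^{-1}v_n(t)-(\epsilon J+B)^{-1}v(t))_{\!*}\le 0$ on a set of full measure along a subsequence, whereupon the pointwise pseudo-monotonicity of $A_\epsilon(t,\cdot)$ yields $u(t)\in A_\epsilon(t,v(t))$ and $(u_n(t),(\epsilon J+B)^{-1}v_n(t))_*\to(u(t),(\epsilon J+B)^{-1}v(t))_*$ a.e.; dominated convergence (using $H(A)(iii)$) then upgrades this to $\int_0^b(u_n,v_n)_*\,dt\to\int_0^b(u,v)_*\,dt$, i.e.\ $\langle u_n,v_n\rangle\to\langle u,v\rangle$, and $u\in\hat A_\epsilon(v)$. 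I expect the main obstacle to be making the pointwise reduction rigorous --- splitting $T$ into the ``good'' set where the liminf of the integrand is nonnegative and its complement, and justifying that on the good set the pointwise pseudo-monotone convergence applies --- and, subordinate to that, correctly exploiting the compact embedding through $(\epsilon J+B)^{-1}$ to secure the strong $L^p(T,H)$ convergence of $(\epsilon J+B)^{-1}v_n$ that feeds the pointwise argument.
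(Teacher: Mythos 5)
Your nonemptiness argument has a genuine gap. Hypothesis $H(A)(i)$ gives graph measurability of $t\mapsto A(t,x)$ only for each \emph{fixed} $x$; since $A(t,\cdot)$ is merely pseudo-monotone (no continuity in $x$, no joint measurability in $(t,x)$ is assumed), the superposition $t\mapsto A_\epsilon(t,u(t))$ for a general $u\in L^p(T,V^*)$ need not be graph measurable, so you cannot invoke a Castaing representation or the Yankov--von Neumann--Aumann theorem directly, as you propose. The paper flags exactly this point and circumvents it: approximate $u$ by step functions $s_n\to u$ (for a step function, $t\mapsto A_\epsilon(t,s_n(t))$ \emph{is} graph measurable by $H(A)(i)$), select $v_n(t)\in A_\epsilon(t,s_n(t))$ measurably, use the growth condition $H(A)(iii)$ to extract $v_n\xrightarrow{w}v$ in $L^{p'}(T,V^*)$, and then combine the demiclosedness of $\mbox{Gr}\,A_\epsilon(t,\cdot)$ (a consequence of pseudo-monotonicity) with the convergence theorem for weak limits of selections to get $v(t)\in\overline{\rm conv}\,w\mbox{-}\limsup_n A_\epsilon(t,s_n(t))\subseteq A_\epsilon(t,u(t))$ a.e., hence $v\in\hat{A}_\epsilon(u)$. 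Without some such approximation step your selection simply is not justified.

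For condition (iii) your overall strategy matches the paper's, and your compactness input (strong $L^p(T,H)$ convergence of $y_n=(\epsilon J+B)^{-1}v_n$ via the compact embedding $W_p(0,b)\hookrightarrow L^p(T,H)$) is a workable substitute for the paper's use of $W_p(0,b)\hookrightarrow C(T,H)$ and pointwise weak $H$-convergence. However, the step you defer as the ``main obstacle'' is precisely the substance of the proof, and a ``Fatou-type argument'' alone does not deliver the pointwise reduction: one must set $\vartheta_n(t)=\langle u_n(t),y_n(t)-y(t)\rangle$, use $H(A)(iii),(iv)$ to produce a uniformly integrable minorant $\gamma_n\le\vartheta_n$, prove that the bad set $E=\{t:\liminf_n\vartheta_n(t)<0\}$ is null (on $E$ the minorant forces $\{y_n(t)\}$ to be bounded in $X$, hence $y_n(t)\xrightarrow{w}y(t)$ in $X$, and pointwise pseudo-monotonicity drives the subsequential limit of $\vartheta_n(t)$ to $0$, a contradiction), and only then apply Fatou together with the extended dominated convergence theorem to get $\vartheta_n\to0$ in $L^1(T)$, hence a.e.\ along a subsequence, after which the pointwise pseudo-monotonicity yields $u(t)\in A_\epsilon(t,v(t))$ and the convergence of the pairings. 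Two further remarks: your detour through $\langle Lv_n,v_n-v\rangle$ is unnecessary, since Definition~\ref{def3}(iii) assumes the $\limsup$ condition directly on the $A$-selections (and your boundary-term inequality is in fact an exact cancellation to zero by periodicity); and the finite-dimensional upper semicontinuity, which you argue by hand, is handled in the paper by citing Proposition 2.23 of Hu \& Papageorgiou, so your sketch there is acceptable but not where the difficulty lies.
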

\begin{proof}
	It is clear that $\hat{A}_\epsilon(u)$ is closed, convex and bounded, thus $w$-compact in $L^{p'}(T,V^*)$. We need to show that $\hat{A}_\epsilon(\cdot)$ has nonempty values. Note that hypotheses $H(A)(i),(ii)$ do not imply the graph measurability of $(t,x)\mapsto A_\epsilon(t,x)$ (see Hu \& Papageorgiouo \cite[p. 227]{9}). To show the nonemptiness of $\hat{A}_\epsilon(u)$ we proceed as follows. Let $\{s_n\}_{n\geq1}\subseteq L^p(T,V^*)$ be step functions such that
	\begin{eqnarray*}
	s_n\rightarrow u\ \mbox{in}\ L^p(T,V^*), s_n(t)\rightarrow u(t)\ \mbox{for almost all}\ t\in T, \\
	|s_n(t)|_*\leq |u(t)|_*\ \mbox{for almost all}\ t\in T,\ \mbox{and for all}\ n\in\NN.
	\end{eqnarray*}
	
	On account of hypothesis $H(A)(i)$, for every $n\in\NN$ the mapping $$t\mapsto A_\epsilon(t,s_n(t))=A(t,(\epsilon J+B)^{-1}s_n(t))$$ is graph measurable. So, we can apply the Yankov-von Neumann-Aumann selection theorem (see Hu \& Papageorgiou \cite[p. 158]{9}) and obtain that $v_n:T\rightarrow V^*$ is measurable and $v_n(t)\in A_\epsilon(t, s_n(t))$ for almost all $t\in T, n\in\NN$. Evidently, $v_n\in L^{p'}(T,V^*)$ and $\{v_n\}_{n\geq1}\subseteq L^{p'}(T,V^*)$ is bounded. So, by passing to a suitable subsequence if necessary we may assume that
	\begin{equation}\label{eq3}
		v_n\xrightarrow{w}v\ \mbox{in}\ L^{p'}(T,V^*)\ \mbox{as}\ n\rightarrow\infty.
	\end{equation}
	
	Note that the pseudo-monotonicity of $A_\epsilon(t,\cdot)$ (see hypothesis $H(A)(ii)$) implies that $\mbox{Gr}\,A_\epsilon(t,\cdot)$. is demiclosed (that is, sequentially closed in $V^*\times V^*_w$, where $V^*_w$ denotes the Hilbert space $V^*$ furnished with the weak topology). So, by (\ref{eq3}) and Proposition 3.9 of Hu \& Papageorgiou \cite[p. 694]{9}, we have
	\begin{eqnarray*}
		& v(t)\in \overline{\rm conv}\, w-\limsup_{n\rightarrow\infty} A_\epsilon(t, s_n(t))\subseteq A_\epsilon(t,u(t))\ \mbox{for almost all}\ t\in T, \\
		&\Rightarrow  v\in \hat{A}_\epsilon(u)\ \mbox{and so}\ \hat{A}_\epsilon(\cdot)\ \mbox{has nonempty values}.
	\end{eqnarray*}
	
	Next, we will prove the $L$-pseudo-monotonicity of $\hat{A}_\epsilon$. So, let $((\cdot,\cdot))_*$ denote the duality brackets for the pair $(L^{p'}(T,V^*),L^p(T,V^*))$, that is,
	\begin{equation}\label{eq4}
		((v,u))_* = \int^b_0(v(t),u(t))_*dt\ \mbox{for all}\ u\in L^{p}(T,V^*), v\in L^{p'}(T,V^*).
	\end{equation}
	
	Consider a sequence $\{u_n\}_{n\geq1}\subseteq W^{per}_p((0,b),V^*)$ such that
	\begin{equation}\label{eq5}
		\begin{array}{ll}
			``u_n\xrightarrow{w}u\ \mbox{in}\ L^p(T,V^*), u'_n\xrightarrow{w}u'\ \mbox{in}\ L^{p'}(T,V^*)\ \mbox{and}\ v_n\in\hat{A}_\epsilon(u_n)\ (\mbox{for all}\ n\in\NN), \\
			\mbox{such that}\ v_n\xrightarrow{w}v\ \mbox{in}\ L^{p'}(T,V^*)\ \mbox{and}\ \limsup_{n\rightarrow\infty}((v_n,u_n-u))_*\leq0".
		\end{array}
	\end{equation}
	
	We have
	\begin{eqnarray*}
		\begin{array}{ll}
			((v_n,u_n-u))_* &= \int^b_0(v_n(t),u_n(t)-u(t))_*dt\ \mbox{(see (\ref{eq4}))} \\
			&= \int^b_0\langle v_n(t), (\epsilon J+B)^{-1}(u_n-u)(t)\rangle dt\ \mbox{(see (\ref{eq2}))}.
		\end{array}
	\end{eqnarray*}
	
	Let $y_n(t)=(\epsilon J+B)^{-1}u_n(t),\ y(t)=(\epsilon J+B)^{-1}u(t)$. Then $y_n, y\in L^p(T,X)$ and we have
	$$
	\langle v_n(t),(\epsilon J+B)^{-1}(u_n-u)(t)\rangle = \langle v_n(t), y_n(t) - y(t)\rangle
	$$
	with $v_n(t)\in A(t, y_n(t))$ for almost all $t\in T$, all $n\in\NN$. Evidently,
	\begin{equation}\label{eq6}
		\{y_n\}_{n\geq1}\subseteq L^p(T,X)\ \mbox{is bounded (see (\ref{eq5}))}.
	\end{equation}
	
	Also, we have
	\begin{equation}\label{eq7}
		\begin{array}{ll}
			& y'_n = ((\epsilon J+B)^{-1}u_n)' \\
			\Rightarrow & \{y'_n\}_{n\geq1}\subseteq L^{p'}(T, X^*)\ \mbox{is bounded (see (\ref{eq5}))}.
		\end{array}
	\end{equation}
	
	It follows from (\ref{eq6}) and (\ref{eq7}) that
	$$
	\{y_n\}_{n\geq1}\subseteq W_p(0,b)\ \mbox{is bounded}.
	$$
	
	So, we may assume that
	\begin{equation}\label{eq8}
		y_n\xrightarrow{w} y\ \mbox{in}\ W_p(0,b)\ \mbox{as}\ n\rightarrow\infty.
	\end{equation}
	
	Evidently, we have $y=(\epsilon J+B)^{-1}u$ and so
	$$
	(\epsilon J+B)^{-1}u_n\xrightarrow{w}(\epsilon J+B)^{-1}u\ \mbox{in}\ L^p(T,X).
	$$
	
	If we denote by $((\cdot,\cdot))$ the duality brackets for the pair $(L^{p'}(T,X^*), L^p(T,X))$, that is,
	$$
	((v,u)) = \int^b_0\langle v(t),u(t)\rangle dt\ \mbox{for all}\ u\in L^p(T,X), v\in L^{p'}(T,X^*),
	$$
	then we have
	$$
	\limsup_{n\rightarrow\infty}((v_n,y_n-y)) = \limsup_{n\rightarrow\infty}((v_n,u_n-u))\leq0\ \mbox{(see (\ref{eq5}))}.
	$$
	
	Recall that $W_p(0,b)$ is continuously embedded in $C(T,H)$. So, from (\ref{eq8}) we have
	\begin{equation}\label{eq9}
		y_n(t)\xrightarrow{w} y(t)\ \mbox{in}\ H\ \mbox{for all}\ t\in T.
	\end{equation}
	
	Let $\vartheta_n(t) = \langle v_n(t), y_n(t)-y(t)\rangle$ and let $N\subseteq T$ be the Lebesgue-null set outside of which hypotheses $H(A)(ii),\,(iii)\,(iv)$ hold. Then for $t\in T\backslash N$, we have
	\begin{eqnarray}
		\vartheta_n(t)\geq c_3||y_n(t)||^p - c_4(t) - ||y(t)|| \left(c_1(t) + c_2||y_n(t)||^{p-1}\right) \label{eq10} \\
		\mbox{(see hypotheses $H(A)(iii),\,(iv)$)}. \nonumber
	\end{eqnarray}
	
	Let $E = \{t\in T:\liminf_{n\rightarrow\infty}\vartheta_n(t)<0\}$. This is a Lebesgue measurable set. Suppose that $\lambda^1(E)>0$ ($\lambda^1(\cdot)$ denotes the Lebesgue measure on $\RR$). From (\ref{eq10}), we see that $\{y_n(t)\}_{n\geq1}\subseteq X$ is bounded for all $t\in E\cap(T\backslash N)$. So, on account of (\ref{eq9}) we obtain that $y_n(t)\xrightarrow{w}y(t)$ in $X$. Fix $t\in E\cap(T\backslash N)$ and choose a suitable subsequence (depending on $t$) such that $\liminf_{n\rightarrow\infty}\vartheta_n(t)=\lim_{k\rightarrow\infty}\vartheta_{n_k}(t)$. The pseudo-monotonicity of $A(t,\cdot)$ (see hypothesis $H(A)(ii)$), implies that
	$$
	\langle v_{n_k}(t), y_{n_k}(t)-y(t)\rangle\rightarrow0,
	$$
	a contradiction since $t\in E$. Therefore $\lambda^1(E)=0$ and so we have
	\begin{equation}\label{eq11}
		0\leq\liminf_{n\rightarrow\infty} \vartheta_n(t)\ \mbox{for almost all}\ t\in T.
	\end{equation}
	
	Invoking Fatou's lemma, we have
	\begin{eqnarray}
		&& 0\leq \int^b_0\liminf_{n\rightarrow\infty} \vartheta_n(t)dt\leq\liminf_{n\rightarrow\infty}\int^b_0\vartheta_n(t)dt\leq\limsup_{n\rightarrow\infty}\int^b_0\vartheta_n(t)dt\leq0, \nonumber \\
		&\Rightarrow & \int^b_0\vartheta_n(t)dt\rightarrow\vartheta\ \mbox{as}\ n\rightarrow\infty. \label{eq12}
	\end{eqnarray}
	
	We have $|\vartheta_n|=\vartheta^+_n+\vartheta^-_n=\vartheta_n+2\vartheta^-_n$ and $\vartheta^-_n(t)\rightarrow0$ for almost all $t\in T$ (see (\ref{eq11})). Also, from (\ref{eq10}) we have
	$$
	\gamma_n(t)\leq\vartheta_n(t)\ \mbox{for almost all}\ t\in T,\ \mbox{and for all}\ n\in\NN,
	$$
	and $\{\gamma_n\}_{n\geq1}\subseteq L^1(T)$ is uniformly integrable. We have
	\begin{eqnarray*}
		&& 0\leq\vartheta^-_n(t)\leq \gamma^-_n(t)\ \mbox{for almost all}\ t\in T,\ \mbox{and for all}\ n\in\NN, \\
		&\Rightarrow & \{\vartheta^-_n\}_{n\geq1}\subseteq L^1(T)\ \mbox{is uniformly integrable}.
	\end{eqnarray*}
	
	Applying the extended dominated convergence theorem (see, for example, Gasinski \& Papageorgiou \cite[p. 901]{7}), we have
	\begin{eqnarray*}
		& \int^b_0\vartheta^-_n(t)dt\rightarrow0, \\
		\Rightarrow & \vartheta_n\rightarrow0\ \mbox{in}\ L^1(T)\ \mbox{(see (\ref{eq12}))}.
	\end{eqnarray*}
	
	So, by passing to a subsequence if necessary, we may assume that
	\begin{eqnarray*}
		& \vartheta_n(t)\rightarrow0\ \mbox{for almost all}\ t\in T, \\
		\Rightarrow & \langle v_n(t), y_n(t)-y(t)\rangle\rightarrow0\ \mbox{for almost all}\ t\in T.
	\end{eqnarray*}
	
	Since $v_n(t)\in A(t,y_n(t))$ for almost all $t\in T$ and for all $n\in\NN$, on account of the pseudo-monotonicity of $A(t,\cdot)$ (see hypothesis $H(A)(ii)$), we have
	\begin{equation*}
		v(t) = A(t,y(t)) = A_\epsilon(t,u(t))\ \mbox{for almost all}\ t\in T
	\end{equation*}
	and $v_n(t)\xrightarrow{w} v(t)$ in $X^*$, $\langle v_n(t), y_n(t)\rangle\rightarrow \langle  v(t),y(t)\rangle$ for almost all $t\in T$.
	
	By the dominated convergence theorem, we have
	\begin{eqnarray*}
		& v_n\xrightarrow{w} v\ \mbox{in}\ L^{p'}(T,X^*),\ ((v_n,y_n))\rightarrow ((v,y)),\ v\in\hat{A}(y), \\
		\Rightarrow & v_n\xrightarrow{w} v\ \mbox{in}\ L^{p'}(T,V^*),\ ((v_n,u_n))\rightarrow((v,u))_*,\  v\in\hat{A}_\epsilon(u).
	\end{eqnarray*}
	
	Finally, using Proposition 2.23 of Hu \& Papageorgiou \cite[p. 43]{9}, we easily see that $\hat{A}_\epsilon(\cdot)$ is usc from finite dimensional subspaces of $L^p(T,V^*)$ into $L^{p'}(T,V^*)_w$.
	
	Therefore we conclude that $\hat{A}_\epsilon$ is indeed $L$-pseudo-monotone.
\end{proof}

We consider the following auxiliary approximate periodic problem:
\begin{equation}\label{eq13}
	\left\{
		\begin{array}{ll}
			u'(t) + A_\epsilon(t,u(t))\ni 0\ \mbox{for almost all}\ t\in T, \\
			u(0) = u(b).
		\end{array}
	\right\}
\end{equation}

\begin{prop}\label{prop5}
	If hypotheses $H(B), H(A)$ hold and $\epsilon>0$, then problem (\ref{eq13}) has a solution $u_\epsilon\in W^{per}_p((0,b),V^*)$.
\end{prop}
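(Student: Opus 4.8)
The plan is to obtain the solution $u_\epsilon$ of the approximate problem \eqref{eq13} as a direct consequence of the abstract surjectivity result Theorem~\ref{th3}, applied to the operator $L+\hat A_\epsilon$ on the space $Y=L^p(T,V^*)$. The space $V^*$ is a Hilbert space, hence strictly convex and reflexive, and so is $L^p(T,V^*)$ for $2\le p<\infty$; the operator $L(u)=u'$ with domain $W^{per}_p((0,b),V^*)$ is linear and maximal monotone, as recalled just before Proposition~\ref{prop4}; and $\hat A_\epsilon$ has been shown in Proposition~\ref{prop4} to be bounded, $L$-pseudo-monotone with nonempty, $w$-compact, convex values. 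The only hypothesis of Theorem~\ref{th3} that still needs checking is the coercivity of $\hat A_\epsilon$ on $L^p(T,V^*)$, that is, that
$$
\frac{\inf\{((w,u))_*:w\in\hat A_\epsilon(u)\}}{\|u\|_{L^p(T,V^*)}}\longrightarrow+\infty\quad\text{as}\ \|u\|_{L^p(T,V^*)}\to+\infty .
$$
Once this is established, Theorem~\ref{th3} gives $0\in (L+\hat A_\epsilon)(u_\epsilon)$ for some $u_\epsilon\in W^{per}_p((0,b),V^*)$, which is exactly a solution of \eqref{eq13}.

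To verify coercivity I would unwind the definitions exactly as in the proof of Proposition~\ref{prop4}. For $u\in L^p(T,V^*)$ and $w\in\hat A_\epsilon(u)$ put $y=(\epsilon J+B)^{-1}u\in L^p(T,X)$, so that $w(t)\in A(t,y(t))$ a.e. and, by \eqref{eq2},
$$
((w,u))_*=\int_0^b\langle w(t),(\epsilon J+B)^{-1}u(t)\rangle\,dt=\int_0^b\langle w(t),y(t)\rangle\,dt\ge c_3\int_0^b\|y(t)\|^p\,dt-\|c_4\|_{L^1(T)},
$$
using hypothesis $H(A)(iv)$. Since $(\epsilon J+B)^{-1}\in\mathcal L(X^*,X)$ and $J+\tfrac1\epsilon B$ (hence $\epsilon J+B$) is bounded, the norms $\|u(t)\|_*$ and $\|y(t)\|$ are comparable up to $\epsilon$-dependent constants; together with the equivalence of $|\cdot|_*$ and $\|\cdot\|_*$ on $X^*$ this gives $\|y\|_{L^p(T,X)}\ge c(\epsilon)\|u\|_{L^p(T,V^*)}$ for a constant $c(\epsilon)>0$. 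Therefore $((w,u))_*\ge c_3\,c(\epsilon)^p\|u\|_{L^p(T,V^*)}^p-\|c_4\|_{L^1(T)}$, and dividing by $\|u\|_{L^p(T,V^*)}$ and letting the norm tend to infinity yields coercivity, because $p\ge2>1$.

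Assembling the pieces: $L^p(T,V^*)$ is strictly convex reflexive, $L$ is linear maximal monotone, and $\hat A_\epsilon$ is bounded, $L$-pseudo-monotone, and coercive, so Theorem~\ref{th3} applies and $L+\hat A_\epsilon$ is surjective; in particular $0$ is in its range, giving $u_\epsilon\in W^{per}_p((0,b),V^*)$ with $u_\epsilon'(t)+A_\epsilon(t,u_\epsilon(t))\ni0$ a.e. on $T$ and $u_\epsilon(0)=u_\epsilon(b)$, which is the asserted solution of \eqref{eq13}. The main obstacle is really bookkeeping rather than a conceptual difficulty: one must be careful that the comparison constant $c(\epsilon)$ relating $\|u\|_{L^p(T,V^*)}$ and $\|y\|_{L^p(T,X)}$ is genuinely positive (this uses that $\epsilon J+B$ is a bounded operator and $J$ is an isometry), and one should note that the $\epsilon$-dependence of all these constants is harmless here, since $\epsilon>0$ is fixed in this proposition — the uniformity in $\epsilon$ needed to pass to the limit is a separate matter taken up afterwards.
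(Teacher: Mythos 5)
Your proposal is correct and follows essentially the same route as the paper: rewrite \eqref{eq13} as $L(u)+\hat A_\epsilon(u)\ni 0$, verify coercivity via $H(A)(iv)$ by passing to $y=(\epsilon J+B)^{-1}u$ and using the boundedness of $\epsilon J+B$ to compare $\|y\|_{L^p(T,X)}$ with $\|u\|_{L^p(T,V^*)}$, and then invoke Theorem~\ref{th3} together with Proposition~\ref{prop4} and the maximal monotonicity of $L$. Your explicit tracking of the $\epsilon$-dependent comparison constant and the remark that uniformity in $\epsilon$ is deferred to the limit argument are just more detailed versions of the paper's estimate \eqref{eq15}.
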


\begin{proof}
	We rewrite (\ref{eq13}) as the following abstract operator inclusion
	\begin{equation}\label{eq14}
		L(u) + \hat{A}_\epsilon(u)\ni0.
	\end{equation}
	
	Let $v\in\hat{A}_\epsilon(u)$. We have
	\begin{equation*}
		((v,u))_* = ((v,(\epsilon J+B)^{-1}u)).
	\end{equation*}
	
	Let $y=(\epsilon J+B)^{-1}u$. Then $v\in \hat{A}(y)$ and so, using hypothesis $H(A)(iv)$, we have
	\begin{eqnarray}
		& ((v,y)) = \int^b_0\langle v(t),y(t)\rangle dt \geq c_3||y||^p_{L^p(T,X)} - ||c_4||_1, \nonumber \\
		\Rightarrow & ((v,u))_* \geq c_5 ||u||^p_{L^p(T,V^*)} - ||c_4||_1\ \mbox{for some}\ c_5>0 \label{eq15}
	\end{eqnarray}
	(recall that $|\cdot|_*$ and $||\cdot||_*$ are equivalent norms on $X^*$). It follows that $\hat{A}_\epsilon(\cdot)$ is coercive. Clearly it is bounded (see hypothesis $H(A)(iii)$). Also, from Proposition \ref{prop4} we know that $\hat{A}_\epsilon(\cdot)$ is L-pseudo-monotone. Since $L(\cdot)$ is maximal monotone, we can use Theorem \ref{th3} and find $u_\epsilon\in W^{per}_p((0,b),V^*)=D(L)$ such that it solves (\ref{eq14}). Evidently, this is a solution of problem (\ref{eq13}).
\end{proof}

Next, we will let $\epsilon\downarrow0$ to produce a solution of problem (\ref{eq1}).
\begin{theorem}\label{th6}
	If hypotheses $H(B), H(A)$ hold, then problem (\ref{eq1}) has a solution $y\in L^p(T,X)$ which satisfies $(By)'\in L^{p'}(T,X^*)$.
\end{theorem}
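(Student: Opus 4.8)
The plan is to pass to the limit as $\epsilon\downarrow 0$ in the family of approximate solutions $u_\epsilon\in W^{per}_p((0,b),V^*)$ produced by Proposition~\ref{prop5}, recovering the candidate solution as $y_\epsilon=(\epsilon J+B)^{-1}u_\epsilon$. First I would derive uniform a priori bounds: testing the inclusion $u_\epsilon'+A_\epsilon(t,u_\epsilon(t))\ni 0$ against $u_\epsilon$ in the $(\cdot,\cdot)_*$ pairing, the periodicity $u_\epsilon(0)=u_\epsilon(b)$ kills the time-derivative term (since $\tfrac12\tfrac{d}{dt}|u_\epsilon(t)|_*^2=(u_\epsilon'(t),u_\epsilon(t))_*$ integrates to zero), and the coercivity estimate \eqref{eq15} gives $c_5\|u_\epsilon\|_{L^p(T,V^*)}^p\le\|c_4\|_1$. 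Rewriting this in terms of $y_\epsilon$ via $H(A)(iv)$ yields a bound on $\|y_\epsilon\|_{L^p(T,X)}$ independent of $\epsilon$; then $H(A)(iii)$ bounds the selection $v_\epsilon\in\hat A(y_\epsilon)$ in $L^{p'}(T,X^*)$, and since $(By_\epsilon)'=(\epsilon Jy_\epsilon+By_\epsilon)'=u_\epsilon'=-v_\epsilon$ a.e., we get $(By_\epsilon)'$ bounded in $L^{p'}(T,X^*)$ as well. Note also $\epsilon Jy_\epsilon=u_\epsilon-By_\epsilon\to 0$ in an appropriate sense since $\epsilon^{1/p}\|y_\epsilon\|$ is controlled; more simply, $\|\epsilon Jy_\epsilon\|_{L^p(T,X^*)}\le\epsilon\|y_\epsilon\|_{L^p(T,X)}\to0$.

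Next I would extract weakly convergent subsequences: $y_\epsilon\xrightarrow{w}y$ in $L^p(T,X)$, $v_\epsilon\xrightarrow{w}v$ in $L^{p'}(T,X^*)$, $By_\epsilon\xrightarrow{w}By$ in $L^p(T,X^*)$ with $(By_\epsilon)'\xrightarrow{w}(By)'$ in $L^{p'}(T,X^*)$, so that $-v=(By)'$; here one uses that $B\in\mathcal L(X,X^*)$ is weakly continuous. The periodicity $B(y_\epsilon(0))=B(y_\epsilon(b))$ should pass to the limit $B(y(0))=B(y(b))$ once we know $By_\epsilon$ is bounded in a space embedding continuously into $C(T,X^*)$ — this follows because $By_\epsilon\in L^p(T,X^*)$ with $(By_\epsilon)'\in L^{p'}(T,X^*)$, which embeds into $C(T,X^*)$ (indeed $AC^{1,p'}(T,X^*)$), and the evaluation maps at $0$ and $b$ are weakly continuous on that space; alternatively use $\int_0^b(By_\epsilon)'\varphi\,dt$ with a suitable test function to capture the boundary values.

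The crux is to identify the weak limit: $v(t)\in A(t,y(t))$ a.e., i.e.\ $v\in\hat A(y)$. For this I would verify the pseudo-monotonicity-type limsup condition $\limsup_{\epsilon\downarrow0}((v_\epsilon,y_\epsilon-y))\le0$ and then run essentially the same a.e.-pointwise argument as in the proof of Proposition~\ref{prop4} (splitting into the set $E$ where $\liminf\vartheta_\epsilon(t)<0$ has positive measure, using the coercivity bound \eqref{eq10} to force boundedness of $\{y_\epsilon(t)\}$ on $E$, invoking pointwise pseudo-monotonicity of $A(t,\cdot)$, then Fatou and the extended dominated convergence theorem). To get the limsup condition, compute $((v_\epsilon,y_\epsilon))=-((u_\epsilon',y_\epsilon))=-\int_0^b\langle(By_\epsilon)'(t),y_\epsilon(t)\rangle dt-\epsilon\int_0^b\langle(Jy_\epsilon)'(t),y_\epsilon(t)\rangle dt$; the first integral equals $-\tfrac12[(By_\epsilon(t),y_\epsilon(t))]_0^b=0$ by symmetry of $B$ (via an integration-by-parts formula in the spirit of Proposition~\ref{prop2} applied to $t\mapsto(By_\epsilon(t),y_\epsilon(t))$, using $H(B)$) together with periodicity, and likewise the $\epsilon$-term is $-\tfrac{\epsilon}{2}[\|y_\epsilon(t)\|^2]_0^b=0$; hence $((v_\epsilon,y_\epsilon))=0$ exactly, while $((v_\epsilon,y))\to((v,y))$ by weak convergence, and we must show $((v,y))\le0$. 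This last inequality comes from pairing the limit equation $(By)'+v=0$ with $y$ — but that pairing requires the integration-by-parts formula $\int_0^b\langle(By)'(t),y(t)\rangle dt=\tfrac12[(By(t),y(t))]_0^b$, which needs $y$ regular enough ($By\in W_{p'}$-type space) and $B$ symmetric, giving $0$ by periodicity $B(y(0))=B(y(b))$ — so in fact $((v_\epsilon,y_\epsilon-y))\to 0-0=0$, and the limsup condition holds with equality. The main obstacle I anticipate is precisely this degenerate integration-by-parts step: because $B$ may be degenerate, $t\mapsto(By(t),y(t))$ need not be absolutely continuous in the classical $W_p$ sense, so one must justify the chain rule for $\frac{d}{dt}(By(t),y(t))$ using only $By\in L^p(T,X^*)$, $(By)'\in L^{p'}(T,X^*)$, $y\in L^p(T,X)$, and the symmetry and monotonicity of $B$ — this is the genuinely technical heart of the argument and where the hypothesis $H(B)$ does its work.
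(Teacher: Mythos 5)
Your overall strategy is the same as the paper's: take the approximate periodic solutions $u_\epsilon$ from Proposition \ref{prop5}, set $y_\epsilon=(\epsilon J+B)^{-1}u_\epsilon$, derive the uniform bounds from \eqref{eq15} and $H(A)(iii)$, extract weak limits, and identify the limit by verifying the $\limsup$ condition and re-running the pseudo-monotonicity argument of Proposition \ref{prop4}. Up to and including the a priori estimates and the passage to weak limits (including $(By)'=-v$ and the periodicity of $By$ in the limit), your proposal matches the paper.

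The gap is exactly at the point you flag and then leave open. Your route reduces the $\limsup$ condition to $((v,y))=0$, i.e.\ to $\int_0^b\langle (By)'(t),y(t)\rangle\,dt=\tfrac12\bigl[\langle By(t),y(t)\rangle\bigr]_0^b=0$, and this requires two things you do not establish: (a) a chain rule $\frac{d}{dt}\langle By(t),y(t)\rangle=2\langle (By)'(t),y(t)\rangle$ valid when only $By$ has a distributional derivative in $L^{p'}(T,X^*)$ while $y$ itself is merely in $L^p(T,X)$ (Proposition \ref{prop2} does not apply, since $y\notin W_p(0,b)$ in general when $B$ is degenerate); and (b) the vanishing of the boundary term, for which $B(y(0))=B(y(b))$ alone is not enough: with $B$ degenerate one has $\langle By(b),y(b)\rangle-\langle By(0),y(0)\rangle=\langle By(0),y(b)-y(0)\rangle$, which need not vanish unless one also knows something like $y(0)=y(b)$ (and even the pointwise values of $y$ in $X$ require justification). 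A "proof proposal" that declares this the technical heart without supplying the lemma (or a citation, e.g.\ to the degenerate integration-by-parts results for linear symmetric monotone $B$ in Showalter's framework) is incomplete precisely where the difficulty of the problem sits. The paper avoids pairing $(By)'$ with $y$ alone: it acts on \eqref{eq21} with $y_n-y$ and splits the time-derivative pairing as in \eqref{eq27}--\eqref{eq29}, so the exact cancellation is needed only for the term involving $y_n-y$, where it follows from the genuine periodicity $y_n(0)=y_n(b)$ (valid because $(\epsilon_nJ+B)^{-1}$ is an isomorphism for fixed $n$) together with the pointwise weak convergence $y_n(t)\xrightarrow{w}y(t)$ in $X$ for all $t\in T$ (which in particular yields $y(0)=y(b)$), while the fixed-$y$ term tends to zero by weak convergence \eqref{eq28}. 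To complete your argument you must either prove (or properly cite) the degenerate chain rule and settle the endpoint issue, or switch to the paper's decomposition.
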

\begin{proof}
	For each $\epsilon>0$, let $u_\epsilon\in W^{per}_p((0,b), V^*)$ be a solution of the approximate problem (\ref{eq13}) (see Proposition \ref{prop5}). We have
	\begin{equation}\label{eq16}
		\left\{
			\begin{array}{ll}
				u'_\epsilon(t) + A_\epsilon(t,u_\epsilon(t))\ni0\ \mbox{for almost all}\ t\in T, \\
				u_\epsilon(0) = u_\epsilon(b).
			\end{array}
		\right\}
	\end{equation}
	
	We take the inner product in $V^*$ with $u_\epsilon(t)$. Then
	\begin{equation*}
		\frac{1}{2}\frac{d}{dt}|u'_\epsilon(t)|^2_* + (v_\epsilon(t), u_\epsilon(t))_* = 0\ \mbox{for almost all}\ t\in T,
	\end{equation*}
	with $v_\epsilon\in L^{p'}(T,V^*), v_\epsilon(t)\in A_\epsilon(t,u_\epsilon(t))$ for almost all $t\in T$. Integrating on $T$ and using (\ref{eq15}) and the periodic conditions, we obtain
	\begin{eqnarray}
		& c_5||u_\epsilon||_{L^p(T,V^*)} \leq ||c_4||_1, \nonumber \\
		\Rightarrow & \{u_\epsilon\}_{\epsilon>0} \subseteq L^p(T,V^*)\ \mbox{is bounded}. \label{eq17}
	\end{eqnarray}
	
	We set $y_\epsilon(t) = (\epsilon J+B)^{-1}u_\epsilon(t)$. Then
	\begin{eqnarray}
		& ||y_\epsilon(t)||\leq ||(\epsilon J+B)^{-1}||_\mathcal{L}||u_\epsilon(t)||^* \nonumber \\
		\Rightarrow & \{y_\epsilon\}_{\epsilon\in(0,1]}\subseteq L^p(T,X)\ \mbox{is bounded}\ (\mbox{see}\  (\ref{eq17})). \label{eq18}
	\end{eqnarray}
	
	On account of hypothesis $H(A)(iii)$, we have
	\begin{equation}\label{eq19}
		|A_\epsilon(t,u_\epsilon(t))|\leq c_1(t)+c_2||y_\epsilon(t)||^{p-1}\ \mbox{for almost all}\ t\in T.
	\end{equation}
	
	Then  it follows from (\ref{eq16}), (\ref{eq18}) and (\ref{eq19}) that
	\begin{equation*}
		\{u'_\epsilon\}_{\epsilon\in(0,1]}\subseteq L^{p'}(T,V^*)\ \mbox{is bounded}.
	\end{equation*}
	
	This together with (\ref{eq17}) implies that
	\begin{equation}\label{eq20}
		\{u_\epsilon\}_{\epsilon\in (0,1]}\subseteq W^{1,p'}((0,b), V^*)\ \mbox{is bounded (recall that $1<p'\leq2\leq p$)}.
	\end{equation}
	
	Now let $\epsilon_n=\frac{1}{n}, u_n=u_{\epsilon_n}, y_n=y_{\epsilon_n}, v_n=v_{\epsilon_n}$ for all $n\in\NN$. Note that
	\begin{equation*}
		[(n^{-1} J+B)y_n(t)]'\in L^{p'}(T,X^*).
	\end{equation*}
	
	We have
	\begin{equation}\label{eq21}
		\left\{
			\begin{array}{ll}
				((n^{-1} J+B)y_n(t))' + v_n(t)=0\ \mbox{for almost all}\ t\in T, \\
				v_n(t)\in A(t, y_n(t))\ \mbox{for almost all}\ t\in T, \\
				u_n(0) = u_n(b).
			\end{array}
		\right\}
	\end{equation}
	
	Note that
	\begin{equation}\label{eq22}
		y_n(0) = (\epsilon J+B)^{-1}u_n(0) = (\epsilon J+B)^{-1}u_n(b) = y_n(b)\ \mbox{for all}\ n\in\NN\ \mbox{(see (\ref{eq21}))}.
	\end{equation}
	
	Also, on account of (\ref{eq18}), (\ref{eq20}) and (\ref{eq21}), we may assume that
	\begin{equation}\label{eq23}
		y_n\xrightarrow{w} y\ \mbox{in}\ L^p(T,X),\ u_n\xrightarrow{w}u\ \mbox{in}\ W^{1,p'}((0,b),V^*),\  v_n\rightarrow v\ \mbox{in}\ L^{p'}(T,X^*).
	\end{equation}
	
	We know that $W^{1,p'}((0,b),V^*)\hookrightarrow C(T,V^*)$ continuously. Hence by (\ref{eq17}), up to  a subsequence, we have
	\begin{eqnarray}
		& u_n\xrightarrow{w} u\ \mbox{in}\ C(T,V^*), \nonumber \\
		\Rightarrow & y_n(t)\xrightarrow{w} y(t)\ \mbox{in}\ X\ \mbox{for all}\ t\in T, \label{eq24} \\
		\Rightarrow & B(y(0)) = B(y(b))\ \mbox{(see (\ref{eq22}))}. \label{eq25}
	\end{eqnarray}
	
	On the first equation in (\ref{eq21}) we act with $(y_n-y)(t)$ and then integrate over $T$. We obtain
	\begin{equation}\label{eq26}
		(((\left[n^{-1}J+B\right]y_n)',y_n-y)) + ((v_n, y_n-y))=0\ \mbox{for all}\ n\in\NN.
	\end{equation}
	
	We obtain
	\begin{eqnarray}
		& (((\left[n^{-1} J+B\right]y_p)',y_n-y)) \nonumber \\
		= & (((\left[n^{-1} J+B\right](y_n-y))',y_n-y)) + (((\left[n^{-1} J+B\right]y)',y_n-y)). \label{eq27}
	\end{eqnarray}
	
	Note that
	\begin{equation}\label{eq28}
		(((\left[n^{-1} J+B\right]y)',y_n-y))\rightarrow0\ \mbox{as}\ n\rightarrow\infty\ \mbox{(see (\ref{eq23}))}.
	\end{equation}
	
	Also, we have
	\begin{eqnarray}
		& (((\left[n^{-1} J+B\right](y_n-y))',y_n-y)) \nonumber \\
		&=  \int^b_0\langle n^{-1}(J(y_n-y))', y_n-y\rangle dt + \int^b_0\langle (B(y_n-y))', y_n-y\rangle dt \nonumber \\
		&=  \int^b_0\frac{1}{n}(y'_n-y',y_n-y)_X dt + \frac{1}{2}\int^b_0\frac{d}{dt}\langle B(y_n-y),y_n-y\rangle dt \nonumber \\
		& \mbox{(recall that $J(\cdot)$ is the Riesz map for $X$ and see hypothesis $H(B)$)} \nonumber \\
		&=  \frac{1}{n}\left[||(y_p-y)(b)|| - ||(y_n-y)(0)||\right] + \frac{1}{2}[\langle B(y_n-y)(b),(y_n-y)(b)\rangle - \nonumber \\
		& \langle B(y_n-y)(0),(y_n-y)(0)\rangle] \nonumber \\
		&=  0\ \mbox{for all}\ n\in\NN\ \mbox{(see (\ref{eq22}), (\ref{eq24}))}. \label{eq29}
	\end{eqnarray}
	
	So, if we return to (\ref{eq27}) and use (\ref{eq28}), (\ref{eq29}) we obtain
	\begin{equation}\label{eq30}
		\lim_{n\rightarrow\infty}(((\left[n^{-1} J+B\right]y_n)', y_n-y))=0.
	\end{equation}
	
	If we use (\ref{eq30}) in (\ref{eq26}), we get
	\begin{equation*}
		\lim_{n\rightarrow\infty}((v_n,y_n-y))=0.
	\end{equation*}
	
	Invoking Proposition \ref{prop4}, we have
	\begin{equation*}
		v\in \hat{A}(y)\ \mbox{and}\ ((v_n,y_n))\rightarrow((v,y)).
	\end{equation*}
	
	Thus,  we obtain from (\ref{eq21}) taking the limit as $n\rightarrow\infty$
	\begin{equation*}
		\left\{
			\begin{array}{ll}
				\frac{d}{dt}(By(t)) + A(t,y(t))\ni0\ \mbox{for almost all}\ t\in T, \\
				B(y(0)) = B(y(b)).
			\end{array}
		\right\}
	\end{equation*}
	
	Therefore $y\in L^p(T,X)$ is a solution of (\ref{eq1}) with $(By)'\in L^{p'}(T,X^*)$.
\end{proof}

\section{An example}

Let $T=[0,b]$ and let $\Omega\subseteq\RR^\NN$ be a bounded domain with a $C^2$-boundary $\partial\Omega$. We consider the following initial boundary value problem:
\begin{equation}\label{eq31}
	\left\{
		\begin{array}{ll}
			\frac{d}{dt}(m(z)u) - {\rm div}\,(a(t,z)Du) + \sum^N_{k=1}(\sin u) D_ku + \partial g(u)\ni0\ \mbox{in}\ T\times\Omega, \\
			u|_{T\times\partial\Omega}=0,\ m(z)u(z,0)=m(z)u(z,b)\ \mbox{for almost all}\ z\in \Omega.
		\end{array}
	\right\}
\end{equation}

We impose the following conditions on the data for problem (\ref{eq31}):

\smallskip
$H(m)$: $m\in L^{N/2}(\Omega)$ if $N>2$, $m\in L^r(\Omega)$ with $r>1$ if $N=2$ and $m\in L^1(\Omega)$ if $N=1$, $m(z)\geq0$ for almost all $z\in\Omega$, $m\not\equiv0$.

\smallskip
$H(a)$: $a\in L^\infty(T\times\Omega)$ and $a(t,z)\geq a_0>0$ for almost all $(t,z)\in T\times\Omega$.

\smallskip
$H(g)$: $g:\RR\rightarrow\RR$ is a continuous convex function and its subdifferential $\partial g(x)$ satisfies
\begin{equation*}
	|\partial g(x)|\leq \hat{c}\,(1+|x|^{p-1})\ \mbox{for all}\ x\in\RR,\ \mbox{and for some}\ \hat{c}>0,\ 2\leq p<\infty.
\end{equation*}
\begin{remark}\label{rem1}
	For any continuous convex function $g(\cdot)$, we know that $\partial g(x)\neq\emptyset$ for all $x\in\RR$ (see Gasinski \& Papageorgiou \cite[p. 527]{7}).
\end{remark}

We introduce the following multifunction
\begin{equation*}
	N_g(u) = \{v\in L^{p'}(\Omega): v(z)\in \partial g(u(z))\ \mbox{for almost all}\ z\in\Omega\}
\end{equation*}
for all $u\in H^1_0(\Omega)$. Evidently, $N_g(\cdot)$ is maximal monotone.

In this case, the evolution triple consists of the following Hilbert spaces:
\begin{equation*}
	X = H^1_0(\Omega),\ H = L^2(\Omega),\ X^* = H^{-1}(\Omega).
\end{equation*}

We know that $X\hookrightarrow H$ compactly (by the Sobolev embedding theorem).

Let $A_1:T\times X\rightarrow X^*$ be the nonlinear map defined by
\begin{eqnarray*}
	\langle A_1(t,u),h\rangle = \int_\Omega a(t,z)(Du,Dh)_{\RR^\NN}dz + \int_\Omega\sin u\left(\sum^N_{k=1}D_ku\right)hdz \\
	\mbox{for all}\ u,h\in X = H^1_0(\Omega).
\end{eqnarray*}

Then the mapping $t\mapsto A_1(t,u)$ is measurable, whereas $u\mapsto A_1(t,u)$ is pseudo-monotone (see, for example, Zeidler \cite[p. 591]{15}). We set
\begin{equation*}
	A(t,u) = A_1(t,u) + N_g(u).
\end{equation*}

Then $A(t,u)$ satisfies hypotheses $H(A)$ (see $H(a)$ and $H(g)$).

In addition, we let $B\in\mathcal{L}(X,X^*)$ be defined by
\begin{equation*}
	B u(\cdot) = m(\cdot)u(\cdot)\ \mbox{for all}\ u\in X=H^1_0(\Omega).
\end{equation*}

Clearly, $B(\cdot)$ satisfies $H(B)$.

We can rewrite problem (\ref{eq31}) as the following abstract implicit evolution inclusion:
\begin{equation*}
	\left\{
		\begin{array}{ll}
			\frac{d}{dt}(Bu(t)) + A(t, u(t))\ni0\ \mbox{for almost all}\ t\in T, \\
			B(u(0)) = B(u(b)).
		\end{array}
	\right\}
\end{equation*}

We can apply Theorem \ref{th6} and obtain the following result.
\begin{prop}\label{prop7}
	If hypotheses $H(m), H(a), H(g)$ hold, then problem (\ref{eq31}) admits a solution $u\in L^p(T,H^1_0(\Omega))$ with
	\begin{equation*}
		(Bu)'\in L^{p'}(T, H^{-1}(\Omega)).
	\end{equation*}
\end{prop}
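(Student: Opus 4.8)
The plan is to check that the data of problem \eqref{eq31}, written in the evolution triple $X=H^1_0(\Omega)\hookrightarrow H=L^2(\Omega)\hookrightarrow X^*=H^{-1}(\Omega)$, satisfy hypotheses $H(B)$ and $H(A)$, and then to apply Theorem \ref{th6} with $p=2$ (the order of the principal part of $A_1$). The compactness of $X\hookrightarrow H$ is the Rellich--Kondrachov theorem, so the standing assumption of Section~3 is met, and the periodic condition $B(u(0))=B(u(b))$ translates into $m(z)u(z,0)=m(z)u(z,b)$ for almost all $z\in\Omega$.

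First I would verify $H(B)$ for $Bu=m(\cdot)u(\cdot)$. Symmetry is immediate from $\langle Bu,v\rangle=\int_\Omega muv\,dz=\langle Bv,u\rangle$, and monotonicity follows from $\langle Bu,u\rangle=\int_\Omega mu^2\,dz\geq0$ since $m\geq0$ a.e. The only nontrivial point is $B\in\mathcal{L}(X,X^*)$, i.e.\ that multiplication by $m$ sends $H^1_0(\Omega)$ boundedly into $H^{-1}(\Omega)$; this is exactly where the integrability thresholds of $H(m)$ are used. For $N>2$ one estimates $|\int_\Omega muv\,dz|\leq\|m\|_{N/2}\|u\|_{2^*}\|v\|_{2^*}\leq C\|m\|_{N/2}\|u\|\,\|v\|$ by H\"older with exponents $N/2,\,2^*,\,2^*$ (where $2^*=2N/(N-2)$) together with $H^1_0\hookrightarrow L^{2^*}$; the cases $N=2$ (using $H^1_0\hookrightarrow L^q$ for every $q<\infty$ and $m\in L^r$, $r>1$) and $N=1$ (using $H^1_0\hookrightarrow C(\bar\Omega)$ and $m\in L^1$) are handled in the same way. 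I expect this Sobolev bookkeeping, together with the identification of $N_g$ with a bounded subset of $H^{-1}$ below, to be the main technical obstacle.

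Next I would verify $H(A)$ for $A(t,u)=A_1(t,u)+N_g(u)$. Measurability of $t\mapsto A_1(t,u)$ is clear from $a\in L^\infty(T\times\Omega)$, and pseudo-monotonicity of $u\mapsto A_1(t,u)$ is classical for a uniformly elliptic second-order operator with a first-order convection perturbation, the passage to the limit in the lower-order term $\int_\Omega(\sin u_n)(\sum_k D_k u_n)h\,dz$ being effected by the compact embedding $X\hookrightarrow H$ (see Zeidler \cite[p.~591]{15}); since $N_g(\cdot)$ is maximal monotone it is pseudo-monotone, and a sum of a pseudo-monotone map and a maximal monotone map is again pseudo-monotone, so $H(A)(i),(ii)$ hold. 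For $H(A)(iii)$ one combines $|a(t,z)|\leq\|a\|_\infty$, $|\sin u|\leq1$, $H^1_0\hookrightarrow L^2$ and the growth bound on $\partial g$ from $H(g)$ (together with $L^{p'}(\Omega)\hookrightarrow H^{-1}(\Omega)$) to get $|A(t,u)|\leq c_1(t)+c_2\|u\|^{p-1}$. For the coercivity $H(A)(iv)$ I would use $a(t,z)\geq a_0>0$ to obtain $\langle A_1(t,u),u\rangle\geq a_0\int_\Omega|Du|^2\,dz$ and observe that the convection term contributes nothing on the diagonal: since $(\sin u)D_k u=D_k(-\cos u)$, an integration by parts and $u|_{\partial\Omega}=0$ give $\int_\Omega(\sin u)(\sum_k D_k u)\,u\,dz=\sum_k\int_\Omega D_k(\sin u)\,dz=0$; finally, for $v\in N_g(u)$ the monotonicity of $\partial g$ yields $v(z)u(z)\geq v_0 u(z)$ with $v_0\in\partial g(0)$, so $\int_\Omega vu\,dz\geq-C\|u\|_{L^2(\Omega)}\geq-\frac{a_0}{2}\int_\Omega|Du|^2\,dz-C'$ by Young's inequality and the Poincar\'e inequality, whence $\inf\{\langle u^*,u\rangle:u^*\in A(t,u)\}\geq c_3\|u\|^2-c_4$ with $c_3>0$ and a constant $c_4\in L^1(T)$ (here $\|u\|=\|Du\|_{L^2(\Omega)}$ is the usual norm of $H^1_0(\Omega)$).

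With $H(B)$ and $H(A)$ established, Theorem \ref{th6} provides $u\in L^p(T,H^1_0(\Omega))$ with $(Bu)'\in L^{p'}(T,H^{-1}(\Omega))$ solving the abstract implicit inclusion; unwinding the identifications of the evolution triple and of $N_g$, this $u$ is precisely a weak solution of \eqref{eq31} with the asserted regularity, which completes the proof of Proposition \ref{prop7}.
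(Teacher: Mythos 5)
Your overall route is the same as the paper's: identify the evolution triple $H^1_0(\Omega)\hookrightarrow L^2(\Omega)\hookrightarrow H^{-1}(\Omega)$, check $H(B)$ for $Bu=mu$ and $H(A)$ for $A=A_1+N_g$, and invoke Theorem \ref{th6}; indeed you supply more detail than the paper does (the Sobolev/H\"older bookkeeping for $B\in\mathcal{L}(X,X^*)$ under $H(m)$, and the integration by parts showing the convection term vanishes on the diagonal, are correct and useful).

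The genuine problem is your exponent bookkeeping. The exponent $p$ in Proposition \ref{prop7} (and in $H(A)(iii),(iv)$) is the growth exponent of $\partial g$ from $H(g)$, with $2\le p<\infty$; it is not ``$2$, the order of the principal part of $A_1$''. If you apply Theorem \ref{th6} with $p=2$, as you announce, then $H(A)(iii)$ requires $|A(t,u)|\le c_1(t)+c_2\|u\|$, which fails whenever $\partial g$ has superlinear growth, and in any case you would only conclude $u\in L^2(T,H^1_0(\Omega))$, not the asserted $u\in L^p(T,H^1_0(\Omega))$. If instead you keep the $p$ of $H(g)$ --- which is what your verification of (iii) actually uses --- then your coercivity estimate gives only $\inf\{\langle u^*,u\rangle:u^*\in A(t,u)\}\ge c_3\|u\|^2-c_4$, i.e.\ exponent $2$, whereas $H(A)(iv)$ demands exponent $p$: $A_1$ is quadratic and $N_g$ is bounded below only through an affine minorant of $g$, so exponent-$p$ coercivity does not follow from your argument when $p>2$. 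As written, your proof therefore covers only the case $p=2$ of the statement. (Admittedly the paper itself merely asserts that $H(A)$ holds, so this tension between (iii) and (iv) for $p>2$ is glossed there too; but relative to the statement you are proving you must either restrict to $p=2$, impose a $p$-coercivity condition on $g$, or verify (iii) and (iv) with one and the same exponent.) A smaller omission: for $N\ge3$ you also need $p\le 2N/(N-2)$ so that $N_g(u)\subseteq L^{p'}(\Omega)\hookrightarrow H^{-1}(\Omega)$, an embedding you use implicitly.
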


\begin{remark}\label{rem2}
	Using the methods developed in  this paper one can also treat antiperiodic problems (see Gasinski \& Papageorgiou \cite{8}), problems with subdifferential terms (see Papageorgiou \& R\u{a}dulescu \cite{preect}), and applications to distributed parameter control systems (see Papageorgiou, R\u{a}dulescu \& Repov\v{s} \cite{13}).
\end{remark}

\medskip
{\bf Acknowledgments.} The authors thank an anonymous referee for the careful reading of this paper and for useful remarks.
This research was supported by the Slovenian Research Agency grants
P1-0292, J1-8131, J1-7025, N1-0064, and N1-0083. V.D.~R\u adulescu acknowledges the support through a grant of the Romanian Ministry of Research and Innovation, CNCS--UEFISCDI, project number PN-III-P4-ID-PCE-2016-0130,
within PNCDI III.

\end{document}